\def\rb{\mathbb{R}}
\def\nb{\mathbb{N}}
\def\zb{\mathbb{Z}}
\numberwithin{equation}{section}
\newtheorem{theorem}{Theorem}[section]
\newtheorem{lemma}[theorem]{Lemma}
\newtheorem{corollary}[theorem]{Corollary}
\newtheorem{definition}{Definition}[section]
\newtheorem{example}[theorem]{Example}
\numberwithin{equation}{section}
\begin{document}

\title{The explicit asymptotic formula of divisor function on average over values of quadratic polynomial}

\author{Nianhong Zhou}
\address{Department of Mathematics\\
East China Normal University\\
500 Dongchuan Road, Shanghai 200241, PR China}
\email{nianhongzhou@outlook.com}
\subjclass[2010]{11N37; 11L05}

\begin{abstract}
Let $F({\bf x})={\bf x}^tQ{\bf x}+\mathbf{b}^t{\bf x}+c\in\mathbb{Z}[{\bf x}]$ be a quadratic polynomial in $\ell (\ge 3 )$ variables ${\bf x} =(x_{1},...,x_{\ell})$, where $F({\bf x})$ is positive when ${\bf x}\in\mathbb{R}_{\ge 1}^{\ell}$, $Q\in {\rm M}_{\ell}(\mathbb{Z})$ is an $\ell\times\ell$ matrix  and its discriminant $\det\left(Q^t+Q\right)\neq 0$.
It gives an explicit asymptotic formula for the following sum
\[
\sum_{{\bf x}\in [1,X]^{\ell}\cap\mathbb{Z}^{\ell}}\tau\left(F({\bf x})\right),
\]
where $\tau$ is the divisor function.
\end{abstract}

\maketitle


\section{Introduction}\label{sec1}
Let $F({\bf x})$ be a quadratic polynomial with $\ell(\ge3)$ variables $x_1, x_2,.., x_{\ell}$ and integer coefficients. Unless stated explicitly otherwise, we shall write ${\bf x}$ for the vector
$(x_1, x_2, . . . , x_{\ell})^{t}\in\mathbb{Z}^{\ell}$.
We assume quadratic polynomial $F({\bf x})$ satisfies
\begin{equation}\label{fdef1}
F({\bf x})={\bf x}^tQ{\bf x}+\mathbf{b}^t{\bf x}+c,
\end{equation}
where $Q\in {\rm M}_{\ell}(\mathbb{Z})$ is an $\ell\times\ell$ matrix with entries $a_{ij}$, vector $\mathbf{b}=(b_1,..., b_{\ell})^t\in\mathbb{Z}^{\ell}$, $c\in\zb $ and suppose those coefficients satisfies the following conditions
\begin{align}\label{fdef2}
\begin{cases}
\min_{{\bf x}\in [1,X]^{\ell}}F({\bf x})>0\\
\Delta_{F}=\det\left(Q^t+Q\right)\neq 0.
\end{cases}
\end{align}
We have already shown the following theorem in \cite{Zhou2017}.
\begin{theorem}\label{tm1} Let $F$ be defined as above, $k\ge 2$ and $\ell\ge 3$. Then, for any $\varepsilon>0$ there exist constants $H_{k,0}(F)$, $H_{k,1}(F)$,..., and $H_{k,k-1}(F)\in \rb$, such that
\[\sum_{{\bf x}\in [1,X]^{\ell}\cap \zb^{\ell}}\tau_k(F({\bf x}))=\sum_{r=0}^{k-1}H_{k,r}(F)\int_{[1,X]^{\ell}}(\log F({\bf t}))^r{\rm d}{\bf t}+O_{k,F,\varepsilon}\left(X^{\ell-\frac{\ell-2}{\ell+2}\min\left(1,\frac{4}{k+1}\right)+\varepsilon}\right),\]
\[H_{k,r}(F)=\frac{1}{r!}\sum_{t=0}^{k-r-1}\frac{1}{t!}\left({\frac{{\rm d}^tL(s;k,F)}{{\rm d}s^t}}\bigg|_{s=1}\right) {\rm Res}\left((s-1)^{r+t}\zeta(s)^k; s=1\right),\]
where
\[L(s; k,F)=\prod_{p}\left(\sum_{m\in\nb}p^{-(\ell-1)m}S_F(p^m)F_k(p^m,s)\right)\]
with
\[S_F(q)=q^{-1}\sum_{a\in\zb_q^*}\sum_{{\bf h}\in(\zb_q)^{\ell}}e\left(\frac{a}{q}F({\bf h})\right),\]
$F_k(q,s)$ is multiplicative for $q\in\nb_{\ge 1}$ and
\[F_k(p^m,s)=p^{-ms}\left(\sum_{v=1}^{k-1}(1-p^{-s})^{v-1}\tau_{v}(p^{m-1})+(1-p^{-s})^{k-1}\tau_k(p^{m-1})\frac{p^s-1}{p-1}\right).\]
for any prime $p$ and integer $m\ge 1$.
\end{theorem}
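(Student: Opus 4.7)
The plan is to combine an iterated Dirichlet hyperbola decomposition of $\tau_k$ with a Poisson summation analysis of the congruence counting function attached to $F$. First, for a parameter $Y=X^{\theta}$ to be optimised, I would write
\[
\tau_k(n)=\sum_{d\mid n,\; d\le Y}\alpha_k(d,n/d)+(\text{correction terms}),
\]
using a $k$-fold hyperbola splitting. The weights $\alpha_k$ are to be chosen so that their Dirichlet transform matches the factor $F_k(q,s)$ appearing in $L(s;k,F)$; the correction terms cover the "dual" tails where conjugate divisors are large. Swapping the order of summation reduces the problem to evaluating, for each $d\le Y$,
\[
N_d(X)=\#\{{\bf x}\in[1,X]^{\ell}\cap\zb^{\ell}:\; d\mid F({\bf x})\},
\]
and then summing the resulting quantities weighted by $\alpha_k(d,\cdot)$.

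Second, I would attack $N_d(X)$ by Poisson summation in the ${\bf x}$ variable, producing a decomposition
\[
N_d(X)=\frac{X^{\ell}S_F(d)}{d^{\ell-1}}+E_d(X),
\]
where $S_F$ is exactly the normalised complete exponential sum defined in the statement and $E_d(X)$ collects the nonzero Fourier frequencies. Multiplicativity of $S_F$ is immediate from the Chinese Remainder Theorem, and the non-degeneracy hypothesis $\Delta_F\neq 0$ gives clean Gauss-sum evaluations at every prime power, in particular $|S_F(p^m)|\ll p^{m(\ell-1)/2}$. Feeding the main term $X^\ell S_F(d)/d^{\ell-1}$ back into the hyperbola decomposition produces a Dirichlet series that factors, by multiplicativity of $S_F$ and of $\alpha_k$, into exactly the Euler product $L(s;k,F)\zeta(s)^k$. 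A Perron-type contour argument, combining the pole of order $k$ of $\zeta(s)^k$ at $s=1$ with the Laurent expansion of $L(s;k,F)$ there, would then yield a polynomial of degree $k-1$ in $\log X$; matching the coefficients of this expansion gives precisely the constants $H_{k,r}(F)$ and the geometric integrals $\int_{[1,X]^{\ell}}(\log F({\bf t}))^{r}\,\mathrm{d}{\bf t}$ claimed in the theorem.

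The main obstacle will be uniform control of the total error $\sum_{d\le Y}|\alpha_k(d)|\cdot|E_d(X)|$. To obtain the claimed saving I would invoke Heath-Brown's $\delta$-method (or equivalently a smooth partition of unity followed by Weyl differencing and Weil-type estimates for the arising Kloosterman sums), which for a non-degenerate quadratic form in $\ell\ge 3$ variables typically produces a saving of order $X^{-(\ell-2)/(\ell+2)}$ over the trivial bound; this is responsible for the first factor in the error exponent. The additional $\min(1,4/(k+1))$ loss reflects the necessary tuning of the cutoff $Y$: a larger $k$ forces a smaller $\theta$, since the $k$-fold hyperbola decomposition generates more divisor constraints and thereby shortens the effective length of the exponential sums one can afford to handle. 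Making the exponential-sum bounds uniform in the modulus $d$, and tracking the cross terms accumulated across the iterated hyperbola steps, will be the most delicate technical ingredient; once that is in place, the extraction of the explicit main-term polynomial is essentially formal bookkeeping.
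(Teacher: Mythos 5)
First, note that this paper does not actually prove Theorem \ref{tm1}: it is quoted from the author's earlier work \cite{Zhou2017} (``We have already shown the following theorem in...''), so there is no in-paper argument to compare yours against. Judged on its own terms, your proposal has the right overall architecture --- a divisor decomposition of $\tau_k$, reduction to the congruence counts $N_d(X)$, evaluation of their main terms through the complete exponential sums $S_F(d)$, and assembly of the resulting singular series into the Euler product $L(s;k,F)$ --- and this is indeed the standard route to statements of this shape. But at every point where the theorem makes a \emph{specific} claim, your plan defers rather than proves.

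Three concrete gaps. (1) The weights $\alpha_k(d,\cdot)$ in your hyperbola splitting are ``to be chosen so that their Dirichlet transform matches $F_k(q,s)$''; this is circular. The particular local factor $F_k(p^m,s)=p^{-ms}\bigl(\sum_{v=1}^{k-1}(1-p^{-s})^{v-1}\tau_v(p^{m-1})+(1-p^{-s})^{k-1}\tau_k(p^{m-1})\frac{p^s-1}{p-1}\bigr)$ encodes a precise combinatorial decomposition of $\tau_k$ (the $k$ divisors are ordered and the distinguished small one is treated differently from the rest), and deriving exactly this expression is the main bookkeeping content of the theorem; it cannot be postulated. (2) The error term. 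Invoking Heath-Brown's $\delta$-method is not obviously the right tool here: one is not counting representations $F({\bf x})=n$ but solutions of $F({\bf x})\equiv 0\bmod d$ uniformly in $d$ over a long range of moduli, and the claim that the method ``typically produces'' the saving $X^{-(\ell-2)/(\ell+2)}$ is asserted, not derived. The factor $\min\left(1,\frac{4}{k+1}\right)$ arises precisely from balancing the range of moduli forced by the $k$-fold decomposition against the uniformity of the bound on $E_d(X)$; this optimisation is the analytic heart of the proof, and your sketch acknowledges it as ``the most delicate technical ingredient'' without supplying it. (3) The main term in the theorem is $\sum_{r}H_{k,r}(F)\int_{[1,X]^{\ell}}(\log F({\bf t}))^r\,\mathrm{d}{\bf t}$, not a polynomial in $\log X$ times $X^{\ell}$. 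Producing the integrals of $(\log F({\bf t}))^r$ requires partial summation against the counting function of $\{{\bf x}:F({\bf x})\le u\}$ (or an equivalent smoothing in the size of $F$), and your ``formal bookkeeping'' step skips exactly this. As it stands the proposal is a reasonable research plan, not a proof.
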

Our focus is upon the case of the explicit asymptotic formula of $k=2$.
To giving our main results, we need some definitions.
\begin{definition}\label{def12}
The first discriminant of quadratic polynomial $F({\bf x})$ as above defined as
\begin{align}
\Delta_{F}=\det\left(Q^t+Q\right),
\end{align}
and the second discriminant of the quadratic polynomial $F({\bf x})$ is defined by
\begin{align}
\mathcal{H}_{F}=\begin{cases}\qquad\qquad\quad 0 & if~ \ell~ is~ odd\\
(-1)^{\ell/2}\det\left(Q^t+Q\right)& if~ \ell~ is~ even,
\end{cases}
\end{align}
where $F({\bf x})$ is defined by (\ref{fdef1})  and (\ref{fdef2}).
\end{definition}
\begin{definition}\label{def13}
The first $Q$-Root of quadratic polynomial $F({\bf x})$ is defined by
\begin{align}
\mathcal{R}_{F}=2c\Delta_{F}-{\bf b}^t\hat{Q}_S{\bf b}
\end{align}
and the second $Q$-Root of quadratic polynomial $F({\bf x})$ is defined by
\begin{align}
\mathcal{O}_{F}=\begin{cases}\qquad\qquad\qquad~ 0&  if~ \ell~ is~ even\\
(-1)^{(\ell+1)/2}\left(2\Delta_{F}c-{\bf b}^t\hat{Q}_S{\bf b}\right)&  if~ \ell~ is~ odd,
\end{cases}
\end{align}
where $\hat{Q}_S$ is the adjoint matrix of matrix $Q_S:=(Q^t+Q)$ and $F({\bf x})$ is defined by (\ref{fdef1})  and (\ref{fdef2}).
\end{definition}

Let $\left(\frac{\cdot}{p}\right)$ is Legendre quadratic residue symbol. Then with the above concepts, our main result as follows.
\begin{theorem}\label{thm1}
If $\Delta_{F}\neq 0$. Then for $X\ge 3$, any $\varepsilon>0$ and $\ell\ge 3$ we have
$$
\sum_{{\bf x}\in [1,X]^{\ell}\cap \zb^{\ell}}\tau(F({\bf x}))
=C_{F}(\tau)_mX^{\ell}\log X+C_{F}(\tau)_sX+O_{F,\varepsilon} \left(X^{\ell-\frac{\ell-2}{\ell+2}+\varepsilon}\right),
$$
where the constants are given by
$$
C_{F}(\tau)_m=2L(\ell, F),~
C_{F}(\tau)_s=\left(\gamma+\frac12I_{FL}+\frac{L'(\ell, F)}{L(\ell,F)}\right)C_{F}(\tau)_m,
$$
$\gamma$ is Euler constant,
\[
I_{FL}=\int_{[0, 1]^{\ell}}\log(F({\bf t}, X))\mathrm{d}{\bf t},~~~~
F({\bf t}, X)={\bf t}^tQ{\bf t}+\frac{{\bf b}^t}{X}{\bf t}+\frac{c}{X^2},
\]
\begin{align*}
L(s, F)
=\begin{cases}
f(s, F)\frac{\zeta(2s-\ell)}{\zeta(2s+1-\ell)}\prod\limits_{p>2}\left(1+\left(\frac{\mathcal{H}_{F}}{p}\right)
\frac{(1-p^{-1})p^{\frac{\ell}{2}-s}}{1-p^{\ell-2s-1}}\right)& if~\mathcal{R}_{F}=0\\
h(s, F)\prod\limits_{p\nmid 2\Delta_{F}\mathcal{R}_{F}}\left\{\left(1+\frac{\left(\frac{\mathcal{O}_{F}}{p}\right)}{p^{s-(\ell-1)/2}}\right)\left(1-\frac{\left(\frac{\mathcal{H}_{F}}{p}\right)}{p^{s-(\ell-2)/2}}\right)\right\}& if~\mathcal{R}_{F}\neq 0,
\end{cases}
\end{align*}
\[
f(s, F)=\prod_{p|2\Delta_{F}}\left\{\left(\frac{(1-p^{\ell-2s})(1-p^{\ell-s-1})}{1-p^{\ell-2s-1}}\right)\sum_{m\ge 0}\frac{\varrho_{F}(p^m)}{p^{ms}}\right\},
\]
\[
h(s, F)=\prod_{p|2\Delta_{F}\mathcal{R}_{F}}\left\{\left(1-p^{\ell-s-1}\right)\sum_{m\ge 0}\frac{\varrho_{F}(p^m)}{p^{ms}}\right\},
\]
and for any integer $n\ge 1$, where
\[\varrho_{F}(n):=\#\{{\bf x}\in(\zb_n)^{\ell}: F({\bf x})\equiv 0\bmod n\}.\]
\end{theorem}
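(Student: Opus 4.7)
The plan is to start from Theorem~\ref{tm1} specialized to $k=2$ and make every ingredient of its main term explicit. Three distinct computations are needed: (a) the residues of $(s-1)^{r+t}\zeta(s)^{2}$ at $s=1$ for $r+t\in\{0,1,2\}$; (b) the integrals $\int_{[1,X]^{\ell}}(\log F(\mathbf{t}))^{r}\mathrm{d}\mathbf{t}$ for $r\in\{0,1\}$; and (c) an explicit Euler product evaluation of $L(s;2,F)$ identifying it, at $s=1$, with the Dirichlet series $L(\ell,F)$ defined in the statement.

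The first two steps are routine. For (a), the Laurent expansion $\zeta(s)^{2}=(s-1)^{-2}+2\gamma(s-1)^{-1}+O(1)$ gives $\mathrm{Res}(\zeta^{2};s=1)=2\gamma$, $\mathrm{Res}((s-1)\zeta^{2};s=1)=1$, and $\mathrm{Res}((s-1)^{2}\zeta^{2};s=1)=0$, so the formula for $H_{k,r}$ in Theorem~\ref{tm1} produces $H_{2,1}(F)=L(1;2,F)$ and $H_{2,0}(F)=2\gamma L(1;2,F)+L'(1;2,F)$. For (b), the scaling $\mathbf{t}=X\mathbf{u}$ together with the positivity of $F$ on $[1,X]^{\ell}$ and the definition of $F(\mathbf{u},X)$ yields
\[
\int_{[1,X]^{\ell}}\log F(\mathbf{t})\,\mathrm{d}\mathbf{t}=2X^{\ell}\log X+I_{FL}X^{\ell}+O(X^{\ell-1}\log X),
\]
while $\int_{[1,X]^{\ell}}\mathrm{d}\mathbf{t}=X^{\ell}+O(X^{\ell-1})$. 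Substituting into Theorem~\ref{tm1} and collecting terms gives
\[
\sum_{\mathbf{x}}\tau(F(\mathbf{x}))=2L(1;2,F)X^{\ell}\log X+\left(2\gamma+I_{FL}+\frac{L'(1;2,F)}{L(1;2,F)}\right)L(1;2,F)\,X^{\ell}+O\!\left(X^{\ell-\frac{\ell-2}{\ell+2}+\varepsilon}\right),
\]
which has the exact shape claimed in the theorem, provided we can verify the two identities $L(1;2,F)=L(\ell,F)$ and $L'(1;2,F)=2L'(\ell,F)$.

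The heart of the proof, and the expected main obstacle, is step (c). Here I plan to rewrite each local factor
\[
L_{p}(s;2,F)=\sum_{m\ge 0}p^{-(\ell-1)m}S_{F}(p^{m})F_{2}(p^{m},s)
\]
in terms of the local representation counts $\varrho_{F}(p^{m})$, using the Möbius inversion identity $S_{F}(p^{m})=\varrho_{F}(p^{m})-p^{\ell-1}\varrho_{F}(p^{m-1})$ for $m\ge 1$, which comes from the direct relation $\varrho_{F}(q)=q^{\ell-1}\sum_{d\mid q}d^{1-\ell}S_{F}(d)$. For primes $p\nmid 2\Delta_{F}\mathcal{R}_{F}$, the sequence $\{\varrho_{F}(p^{m})\}$ is determined by Hensel lifting from $\varrho_{F}(p)$, which in turn is given by the classical Gauss-sum count of zeros of a non-degenerate quadratic form over $\mathbb{F}_{p}$. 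This computation naturally produces the Legendre symbols $\left(\frac{\mathcal{H}_{F}}{p}\right)$ (in even dimension) and $\left(\frac{\mathcal{O}_{F}}{p}\right)$ (in odd dimension) attached to the discriminants of Definitions~\ref{def12}--\ref{def13}, yielding exactly the Euler factors displayed in the theorem. The dichotomy $\mathcal{R}_{F}\neq 0$ versus $\mathcal{R}_{F}=0$ reflects the geometry of the affine quadric $\{F=0\}$: in the first case the scheme is smooth at good primes and the local factor splits into two rank-one pieces, while in the second the degeneracy contributes the additional factor $\zeta(2s-\ell)/\zeta(2s-\ell+1)$. The bad primes $p\mid 2\Delta_{F}\mathcal{R}_{F}$ are handled by completing the square locally and tracking the resulting character and discriminant data; the contributions are collected unchanged into the finite polynomial factors $f(s,F)$ or $h(s,F)$, completing the identification.
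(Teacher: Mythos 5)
Your global skeleton coincides with the paper's: specialize Theorem \ref{tm1} to $k=2$, compute the residues of $(s-1)^{r+t}\zeta(s)^{2}$ at $s=1$ (your values $2\gamma$, $1$, $0$ and the resulting $H_{2,1}$, $H_{2,0}$ match \eqref{ll0} once one checks $L(1;2,F)=L(\ell,F)$ and $L^{\langle 1\rangle}(1;2,F)=2L'(\ell,F)$, which follow at once from $F_2(q,1)=q^{-1}$ and $F_2^{\langle 1\rangle}(q,1)=-2q^{-1}\log q$), expand the logarithmic integral by the scaling ${\bf t}=X{\bf u}$, and then evaluate the Euler product of $L(s,F)=\sum_q S_F(q)q^{-s}$ prime by prime, using \eqref{21} to package the bad primes into $f(s,F)$ and $h(s,F)$. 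Where you genuinely diverge is the local analysis at the good primes: the paper diagonalizes the form modulo $p^t$ and evaluates $S_F(p^t)$ directly through Hua's quadratic Gauss sums and Ramanujan sums (Lemma \ref{tm22}, Corollary \ref{tm23}), whereas you pass to the counts $\varrho_F(p^m)$ and invoke Hensel lifting from $\varrho_F(p)$. For $\mathcal{R}_F\neq 0$ and $p\nmid 2\Delta_F\mathcal{R}_F$ these are equivalent: the affine quadric is smooth modulo $p$, so $\varrho_F(p^m)=p^{(\ell-1)(m-1)}\varrho_F(p)$, hence $S_F(p^t)=0$ for $t\ge 2$, and the $\mathbb{F}_p$-count produces the same Legendre symbols. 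That branch of your plan is sound.

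The gap is the branch $\mathcal{R}_F=0$, which is the entire first case of the theorem. There every prime divides $\mathcal{R}_F$, the translated quadric ${\bf s}^t(\Delta_F Q_S){\bf s}\equiv 0$ has the singular solution ${\bf s}\equiv 0$, Hensel lifting fails, and indeed $S_F(p^{2m})\neq 0$ for every $m$ (Lemma \ref{tm22}, part 1). Your proposal covers this with the single unproved assertion that ``the degeneracy contributes the additional factor $\zeta(2s-\ell)/\zeta(2s-\ell+1)$''; but this is precisely where most of the work of Sections 2.2--2.3 lives: one must compute $S_F(p^t)$ for all $t$ (or, equivalently, stratify the solutions of $\varrho_F(p^t)$ by the $p$-adic valuation of ${\bf s}$), sum the resulting geometric series $\sum_{m}S_F(p^{2m})p^{-2ms}$ over all good primes, and, when $\ell$ is even, extract the extra character product $\prod_{p>2}\bigl(1+\left(\frac{\mathcal{H}_{F}}{p}\right)\frac{(1-p^{-1})p^{\ell/2-s}}{1-p^{\ell-2s-1}}\bigr)$ coming from the odd powers $S_F(p^{2m+1})$ --- a factor your sketch omits entirely. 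As written, the plan does not yield the displayed formula for $L(s,F)$ in the case $\mathcal{R}_F=0$; you need either the valuation stratification or the paper's direct Gauss-sum evaluation to close it.
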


\section{The proof of the Main Theorem}
\subsection{The treatment of $F_2(q,s)$} When $k=2$, by Theorem \ref{tm1} we have
\begin{align*}
F_2(p^m,s)=p^{-ms}\left(1+(1-p^{-1})\tau_2(p^{m-1})\frac{p-1}{p-1}\right).
\end{align*}
Thus
\[F_2(p^m,1)=p^{-m}\;\mbox{and}\; F_2^{\langle 1\rangle}(p^m,1)=-2m p^{-m}\log p.\]
$F_2(p^m,s)$ is multiplicative implies
\[F_2(q,1)=q^{-1} \;\mbox{and}\;  F_2^{\langle 1\rangle}(q,1)=-2q^{-1}\log q.\]
Hence
\[L(1; 2,F)=\sum_{q=1}^{\infty}\frac{S_F(q)}{q^{\ell}}\;\mbox{and}\; L^{\langle 1\rangle}(1; 2,F)=-2\sum_{q=1}^{\infty}\frac{S_F(q)\log q}{q^{\ell}}.\]
Therefore we define
\begin{equation}\label{lf2}
L(s,F)=\sum_{q=1}^{\infty}\frac{S_F(q)}{q^{s}},
\end{equation}
then
\[\sum_{{\bf x}\in [1,X]^{\ell}\cap \zb^{\ell}}\tau (F({\bf x}))=2H_{2,1}(F)X^{\ell}\log X+\left(H_{2,1}(F)I_{FL}+H_{2,0}(F)\right)X^{\ell}+O_{F}\left(X^{\ell-\frac{\ell-2}{\ell+2}+\varepsilon}\right),\]
where
\begin{equation}\label{ll0}
H_{2,1}(F)=L(\ell,F),\;\; H_{2,0}(F)=2\gamma L(\ell,F)+2 L^{'}(\ell,F)
\end{equation}
\subsection{The treatment of $S_F(q)$}
We consider the value of $S_F(p^m)$. Let $m\ge 1$ be an integer. It is easily seen that
\begin{equation}\label{21}
S_F(p^m)=\varrho_{F}(p^m)-p^{\ell-1}\varrho_{F}(p^{m-1}).
\end{equation}
Hence let us consider $\varrho_{F}(p^t)$. For every odd prime and integer $t\ge 1$,
\begin{align*}
\varrho_{F}(p^t)&=\#\{{\bf x}\in(\zb_{p^t})^{\ell}:{\bf x}^tQ{\bf x}+{\bf b}^t{\bf x}+c\equiv 0\bmod p^t\}\\
&=\#\{{\bf x}\in(\zb_{p^t})^{\ell}:{\bf x}^t{Q_S}{\bf x}+2{\bf b}^t{\bf x}+2c\equiv 0\bmod p^t\}.
\end{align*}
If $(\Delta_{F}, p)=1$, then do the following invertible linear transform
$${\bf s}={\bf x}-\Delta_{F}^{-1}\hat{Q}_S{\bf b},$$
where $\Delta_{F}^{-1}\Delta_F\equiv 1\bmod p^t$ and then
\begin{align}\label{phop}
\varrho_{F}(p^t)&=\#\{{\bf s}\in(\zb_{p^t})^{\ell}:{\bf s}^tQ_S{\bf s}\equiv \Delta_{F}^{-1}{\bf b}^t\hat{Q}_S{\bf b}-2c \bmod p^t\}\nonumber\\
&=\#\{{\bf s}\in(\zb_{p^t})^{\ell}:{\bf s}^t\left(\Delta_{F}Q_S\right){\bf s}+\mathcal{R}_{F}\equiv 0 \bmod p^t\},
\end{align}
where $\mathcal{R}_{F}$ is defined by Definition \ref{def13}. On the other hand, by the Proposition 12 of Chapter 12 in \cite{Coppel2006} we have the following lemma
\begin{lemma}Let $p$ be an odd prime. Let $Q_S$ is a symmetric $\ell\times\ell$ matrix in finite field $\mathbb{F}_{p}$. Also, let $\Delta_{F}$ defined by Definition \ref{def12}. Then, the quadratic form ${\bf s}^t\left (\Delta_{F}Q_S\right) {\bf s}$ equivalent to the following standard diagonal quadratic form
\begin{align*}
\mathcal{D}_{m}({\bf x})=x_1^2+x_2^2+. . . +\Delta_{F}^{\ell+1}x_{\ell}^2.
\end{align*}
Thus there exists an orthogonal matrix  $P_{F}$ in $\mathbb{F}_{p}$ such that
\begin{align*}
P_{F}^tQ_SP_{F}=\mathrm{diag}(1, 1, . . , \Delta_{F}^{\ell+1}).
\end{align*}
\end{lemma}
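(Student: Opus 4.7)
The plan is to apply the standard diagonalization of non-degenerate symmetric bilinear forms over a field of odd characteristic, specialised to $\mathbb{F}_p$. First, I would carry out a Gram--Schmidt procedure: since $\det(Q_S)=\Delta_{F}$ is nonzero in $\mathbb{F}_p$ and $2$ is invertible there, one can choose $\mathbf{v}_1 \in \mathbb{F}_p^{\ell}$ with $\mathbf{v}_1^t Q_S \mathbf{v}_1 \neq 0$ and work inductively inside the $Q_S$-orthogonal complement of $\mathbf{v}_1$, which is an $(\ell-1)$-dimensional subspace on which the restriction of $Q_S$ is still non-degenerate. This produces an invertible $P_0 \in \mathrm{GL}_{\ell}(\mathbb{F}_p)$ with $P_0^t Q_S P_0 = \mathrm{diag}(d_1, \ldots, d_{\ell})$ for some $d_i \in \mathbb{F}_p^*$.

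Next, I would normalise the diagonal modulo squares. The transformation $P_0 \mapsto P_0 \cdot \mathrm{diag}(c_1, \ldots, c_{\ell})$ sends $d_i$ to $c_i^2 d_i$, so using $[\mathbb{F}_p^*:(\mathbb{F}_p^*)^2]=2$, each $d_i$ may be rescaled to either $1$ or a fixed non-square $n \in \mathbb{F}_p^*$. The key step is then to eliminate pairs of non-squares. For any non-square $n$, the binary form $n x^2 + n y^2$ represents $1$ over $\mathbb{F}_p$: the sets $\{n x^2 : x \in \mathbb{F}_p\}$ and $\{1 - n y^2 : y \in \mathbb{F}_p\}$ each have cardinality $(p+1)/2$ and therefore must intersect. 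Writing $n x_0^2 + n y_0^2 = 1$, the vectors $(x_0, y_0)$ and $(-y_0, x_0)$ form a $Q_S$-orthonormal basis of the corresponding $2$-dimensional block, transforming $\mathrm{diag}(n,n)$ into $\mathrm{diag}(1,1)$. Iterating, the diagonal reduces to $(1,1,\ldots,1,d)$, with $d$ uniquely determined modulo $(\mathbb{F}_p^*)^2$ by the determinant identity $\det(P_F)^2 \det(Q_S) = \prod_i d_i$, i.e.\ by $\det(Q_S)=\Delta_{F}$.

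Applying this procedure to the form $\mathbf{s}^t(\Delta_{F} Q_S)\mathbf{s}$, whose matrix has determinant $\det(\Delta_{F} Q_S)=\Delta_{F}^{\ell}\cdot\Delta_{F}=\Delta_{F}^{\ell+1}$, yields the standard diagonal form $\mathcal{D}_{m}(\mathbf{x}) = x_1^2 + \cdots + x_{\ell-1}^2 + \Delta_{F}^{\ell+1} x_{\ell}^2$. The matrix $P_{F}$ of the lemma is then the composition of $P_0$ with the diagonal rescaling and the $2\times 2$ cancellation transformations constructed above; tracking the determinant of this composition gives the stated final coefficient. The only non-trivial ingredient is the universality of non-degenerate binary forms over $\mathbb{F}_p$ used in the pairing-off step; once that pigeonhole observation is in place, the rest is routine linear-algebra bookkeeping.
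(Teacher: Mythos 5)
Your proof is correct. The paper itself offers no argument here: it simply invokes Proposition 12 of Chapter 12 of Coppel's book, which is exactly the standard classification of non-degenerate quadratic forms over $\mathbb{F}_p$ ($p$ odd) by rank and discriminant that you reprove from scratch. Your route --- inductive Gram--Schmidt diagonalization (valid since $2$ and $\det Q_S=\Delta_F$ are units in $\mathbb{F}_p$), rescaling each diagonal entry into $\{1,n\}$ for a fixed non-square $n$, and pairing off two copies of $n$ via the pigeonhole fact that $nx^2+ny^2$ represents $1$ --- is the textbook proof of that proposition, so in substance you are supplying the proof the paper outsources. Your determinant bookkeeping $\det(\Delta_F Q_S)=\Delta_F^{\ell+1}$ correctly identifies the last coefficient up to squares, which is all the equivalence class requires. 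Two remarks on the statement rather than on your argument: the word ``orthogonal'' in the lemma should be ``invertible'' (your $P_F$ is a general change of basis, and that is all that is used later), and the final display in the lemma should read $P_F^t(\Delta_F Q_S)P_F$ rather than $P_F^tQ_SP_F$; your proof implicitly corrects both, since you diagonalize $\Delta_F Q_S$ throughout, which is the form actually used in equation (\ref{ass8}).
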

Using this lemma then easily seen that if $(2\Delta_{F}, p)=1$, one has
\begin{align}\label{ass8}
\varrho_{F}(p^t)=\#\{{\bf x}\in(\zb_{p^t})^{\ell} :x_1^2+x_2^2+. . . +\Delta_{F}^{\ell+1}x_{\ell}^2+\mathcal{R}_{F}\equiv0\bmod p^t\}.
\end{align}
Together with (\ref{phop}) and (\ref{ass8}), then
\begin{align*}
p^{t}S_F(p^t)&=\sum_{a\in\zb_{p^t}^*}\sum_{k_j\in\zb_{p^t}, j=1, 2, . , \ell}e\left(\frac{a}{p^t}(k_1^2+. . . +k_{\ell-1}^2+\Delta_{F}^{\ell+1}k_{\ell}^2+\mathcal{R}_{F})\right)\\
&=\sum_{a\in\zb_{p^t}^*}e\left(\frac{a\mathcal{R}_{F}}{p^t}\right)\left(\sum_{r\in\zb_{p^t}}e\left(\frac{ar^2}{p^t}\right)\right)^{\ell-1}\sum_{k_{\ell}\in\zb_{p^t}}e\left(\frac{a\Delta_{F}^{\ell+1}k_{\ell}^2}{p^t}\right).
\end{align*}
holds for all $(p, 2\Delta_{F})=1$.
By Theorem 6 of Chapter 17 in \cite{Hua1957}, we have
\begin{equation}\label{6ss9}
G(p^t, c)=\sum_{r\in\zb_{p^t}}e\left(\frac{cr^2}{p^t}\right)=\begin{cases}\quad p^{\frac{t}{2}}\qquad& if~t~is~even\\
p^{\frac{t}{2}}\left(\frac{c}{p}\right)\chi_{p}\quad\qquad& if~t~is~odd
\end{cases}
\end{equation}
and
\begin{equation*}
\chi_p=\begin{cases}\quad 1\quad \quad& p\equiv 1\bmod 4\\
\sqrt{-1} \quad& p\equiv -1\bmod 4
\end{cases}
\end{equation*}
when $p$ is an odd prime and $c\in\zb$ satisfy $(p,c)=1$. Thus
\begin{align*}
p^{t}S_F(p^t)&=\begin{cases}\sum\limits_{a\in\zb_{p^t}^*}e\left(\frac{a\mathcal{R}_{F}}{p^t}\right)\left(p^{\frac{t}{2}}\right)^{\ell-1}p^{\frac{t}{2}}\quad~~\qquad\qquad\qquad\quad & if~t~is~even\\
\sum\limits_{a\in\zb_{p^t}^*}e\left(\frac{a\mathcal{R}_{F}}{p^t}\right)\left(p^{\frac{t}{2}}{\left(\frac{a}{p}\right)}\chi_{p}\right)^{\ell-1}p^{\frac{t}{2}}\left(\frac{a\Delta_{F}^{\ell+1}}{p}\right)\chi_{p} \quad&  if~t~is~odd
\end{cases}\\
&=\begin{cases}p^{\frac{\ell t}{2}}\sum\limits_{a\in\zb_{p^t}^*}e\left(\frac{a\mathcal{R}_{F}}{p^t}\right)\qquad\quad~~~~\qquad\qquad\qquad\qquad&  if~t~is~even\\
p^{\frac{\ell t}{2}}\chi_{p}^{\ell}\sum\limits_{a\in\zb_{p^t}^*}e\left(\frac{a\mathcal{R}_{F}}{p^t}\right)\left(\frac{a}{p}\right)^{\ell-1}\left(\frac{a\Delta_{F}^{\ell+1}}{p}\right)\qquad\qquad & if~t~is~odd.
\end{cases}
\end{align*}
Recall the properties of Legendre symbol, then
\begin{align}\label{25}
p^{t}S_F(p^t)=\begin{cases}\qquad\qquad p^{\frac{\ell t}{2}}c_{p^t}\left(\mathcal{R}_{F}\right)\qquad\qquad\qquad\qquad\qquad\qquad& if~t~is~even\\
\begin{cases}
p^{\frac{\ell t}{2}}\chi_{p}^{\ell}\sum\limits_{a\in\zb_{p^t}^*}e\left(\frac{a\mathcal{R}_{F}}{p^t}\right)\left(\frac{a}{p}\right)\qquad& if~\ell~is~odd\\
p^{\frac{\ell t}{2}}\chi_{p}^{\ell}\sum\limits_{a\in\zb_{p^t}^*}e\left(\frac{a\mathcal{R}_{F}}{p^t}\right)\left(\frac{\Delta_{F}}{p}\right)& if~\ell~is~even
\end{cases}~ &if~t~is~odd,
\end{cases}
\end{align}
where $c_{p^t}(\mathcal{R}_{F})$ is Ramanujan sum.
It is easily seen that
\begin{equation}\label{26}
\sum\limits_{a\in\zb_{p^t}^*}e\left(\frac{a\mathcal{R}_{F}}{p^t}\right)\left(\frac{a}{p}\right)=
{\bf 1}_{\mathcal{R}_{F}\equiv 0\bmod p^{t-1}}(\mathcal{R}_{F})p^{t-1}\sum\limits_{a\in\zb_{p}^* }\left(\frac{a}{p}\right)e\left(\frac{a\mathcal{R}_{F}}{p^t}\right)
\end{equation}
and
\begin{equation}\label{27}
\sum\limits_{a\in\zb_{p^t}^*}e\left(\frac{a\mathcal{R}_{F}}{p^t}\right)\left(\frac{\Delta_{F}}{p}\right)=
{\bf 1}_{\mathcal{R}_{F}\equiv 0\bmod p^{t-1}}(\mathcal{R}_{F})p^{t-1}\left(\frac{\Delta_{F}}{p}\right)\sum\limits_{a\in\zb_{p}^* }e\left(\frac{a\mathcal{R}_{F}}{p^t}\right).
\end{equation}
If $\mathcal{R}_{F}\equiv 0\bmod p^{t-1}$, then
\begin{align}\label{28}
\sum\limits_{a\in\zb_{p}^* }\left(\frac{a}{p}\right)e\left(\frac{a\mathcal{R}_{F}}{p^t}\right)&=2\sum_{\substack{a\in\zb_{p}^*\\a~is~a~quadratic\\~residue~modulo~p}}
e\left(\frac{a\mathcal{R}_{F}}{p^t}\right)-\sum\limits_{a\in\zb_{p}^*}e\left(\frac{a\mathcal{R}_{F}}{p^t}\right)\nonumber\\
&=\sum_{b=1}^{p-1}\left(
e\left(\frac{b^2\mathcal{R}_{F}}{p^t}\right)-e\left(\frac{b\mathcal{R}_{F}}{p^t}\right)\right)
={\bf 1}_{p^{t}\nmid \mathcal{R}_{F}}\sqrt{p}\left(\frac{\mathcal{R}_{F}/p^{t-1}}{p}\right)\chi_p.
\end{align}
When $\ell$ is an even positive integer, then $\chi_{p}^{\ell}=\left(\frac{(-1)^{\ell/2}}{p}\right)$ for every odd prime number. Note that $\varphi(p^t)=p^{t}(1-p^{-1})$ and
\[c_{n}(m)=\mu(n/(m, n))\varphi(n)/\varphi(n/(m, n)),\]
then by (\ref{25}), (\ref{26}), (\ref{27}) and (\ref{28}) we have the following lemma.
\begin{lemma}\label{tm22}Let $t\ge 1$ be an integer and $p$ satisfies $(p, 2\Delta_{F})=1$, we have

1). If  $\mathcal{R}_{F}\equiv 0\bmod p^{t}$, then
\begin{align*}
S_F(p^t)=\begin{cases}\qquad\qquad (1-p^{-1})p^{\frac{\ell t}{2}}\qquad\qquad\qquad\qquad\qquad\qquad& if~ t~ is~ even\\
\begin{cases}
\qquad\qquad\quad 0\qquad\qquad\qquad\qquad& if~ \ell~ is~ odd\\
(1-p^{-1})p^{\frac{\ell t}{2}}\left(\frac{(-1)^{{\ell}/{2}}\Delta_{F}}{p}\right)\qquad\qquad& if~ \ell~ is~ even
\end{cases}& if~t~is~odd.
\end{cases}
\end{align*}

2). If $\mathcal{R}_{F}\equiv 0\bmod p^{t-1}$ but $\mathcal{R}_{F}\not\equiv 0\bmod p^t$, then
\begin{align*}
S_F(p^t)=\begin{cases}\qquad\qquad -p^{\frac{\ell t-2}{2}}\qquad\qquad\qquad\qquad\qquad\qquad\qquad\quad& if~ t~ is~ even\\
\begin{cases}
p^{\frac{\ell t-1}{2}}\left(\frac{(-1)^{\frac{\ell+1}{2}}\mathcal{R}_{F}/p^{t-1}}{p}\right)\qquad& if~ \ell~ is~ odd\\
-p^{\frac{\ell t-2}{2}}\left(\frac{(-1)^{\ell/2}\Delta_{F}}{p}\right)\qquad\qquad& if~ \ell~ is~ even
\end{cases}\qquad& if~t~is~odd.
\end{cases}
\end{align*}

3). If $\mathcal{R}_{F}\not\equiv 0\bmod p^{t-1}$, then
$
S_F(p^t)=0
$.
\end{lemma}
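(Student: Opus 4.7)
The plan is to start from the case breakdown of $p^t S_F(p^t)$ already recorded in (\ref{25}) and, within each case, evaluate the remaining exponential sum using the Ramanujan-sum formula together with the Gauss-sum identities displayed in (\ref{26})--(\ref{28}). The main dichotomies are the three subcases for how $p^t$ divides $\mathcal{R}_F$, the parity of $t$, and, when $t$ is odd, the parity of $\ell$.

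First I would dispose of the case $t$ even. Here (\ref{25}) reduces everything to the classical Ramanujan sum $c_{p^t}(\mathcal{R}_F)$. Applying $c_n(m) = \mu(n/(m,n))\varphi(n)/\varphi(n/(m,n))$ one obtains $c_{p^t}(\mathcal{R}_F) = \varphi(p^t) = p^t(1-p^{-1})$ when $p^t \mid \mathcal{R}_F$, $c_{p^t}(\mathcal{R}_F) = \mu(p)\varphi(p^t)/\varphi(p) = -p^{t-1}$ when $p^{t-1} \| \mathcal{R}_F$, and $c_{p^t}(\mathcal{R}_F) = 0$ otherwise (since the arising $\mu(p^k)$ has $k \ge 2$). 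Dividing by $p^t$ and combining with the prefactor $p^{\ell t/2}$ yields the three stated values for even $t$.

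For $t$ odd the parity of $\ell$ has to be handled separately. When $\ell$ is even, (\ref{25}) and (\ref{27}) together give the factor $\chi_p^\ell \left(\frac{\Delta_F}{p}\right) \sum_{a \in \zb_p^*} e(a\mathcal{R}_F/p^t)$, with the indicator $\mathbf{1}_{p^{t-1} \mid \mathcal{R}_F}$ and a $p^{t-1}$ in front; substituting $\chi_p^\ell = \left(\frac{(-1)^{\ell/2}}{p}\right)$ and evaluating the complete sum $\sum_{a \in \zb_p^*} e(ar/p)$ (equal to $p - 1$ if $p \mid r$ and to $-1$ otherwise) produces the claimed expressions. When $\ell$ is odd, (\ref{26}) together with the Gauss-sum evaluation (\ref{28}) reduces the inner sum to $p^{t-1}\sqrt{p}\,\chi_p \left(\frac{\mathcal{R}_F/p^{t-1}}{p}\right)$ in the boundary case $p^{t-1} \| \mathcal{R}_F$; then using $\chi_p^{\ell+1} = \left(\frac{(-1)^{(\ell+1)/2}}{p}\right)$, valid because $\ell + 1$ is even, one extracts the stated Legendre-symbol factor. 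In the diagonal case $p^t \mid \mathcal{R}_F$ with $\ell$ and $t$ both odd, the inner character sum $\sum_{a \in \zb_p^*} \left(\frac{a}{p}\right)$ vanishes identically, so $S_F(p^t) = 0$.

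The third case $\mathcal{R}_F \not\equiv 0 \bmod p^{t-1}$ is immediate: for $t$ odd the indicator functions in (\ref{26})--(\ref{27}) kill the sum, while for $t$ even the Ramanujan sum $c_{p^t}(\mathcal{R}_F)$ vanishes because the $\mu(p^k)$ that arises has $k \ge 2$. No step is conceptually hard; the real work is bookkeeping the factors of $p^{1/2}$, $\chi_p$, and the various Legendre symbols, and checking that they combine correctly in the subcase where $t$ and $\ell$ are both odd, which is where the second $Q$-Root $\mathcal{O}_F$ of Definition \ref{def13} enters through its characteristic sign $(-1)^{(\ell+1)/2}$.
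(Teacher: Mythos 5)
Your proposal is correct and follows essentially the same route as the paper: both start from the case decomposition of $p^tS_F(p^t)$ in (\ref{25}), evaluate the even-$t$ cases via the Ramanujan sum formula $c_n(m)=\mu(n/(m,n))\varphi(n)/\varphi(n/(m,n))$, and the odd-$t$ cases via (\ref{26})--(\ref{28}) together with the identities $\chi_p^{\ell}=\left(\frac{(-1)^{\ell/2}}{p}\right)$ for $\ell$ even and $\chi_p^{\ell+1}=\left(\frac{(-1)^{(\ell+1)/2}}{p}\right)$ for $\ell$ odd. The bookkeeping of the powers of $p^{1/2}$ and the Legendre symbols checks out in every subcase, matching the paper, which assembles the lemma from exactly these ingredients.
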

\begin{corollary}\label{tm23}
Let $t\ge 1$ be an integer and prime number $p$ satisfies $(p, 2\Delta_{F}\mathcal{R}_{F})=1$. Then
\begin{align*}
S_F(p^t)=\begin{cases}
\begin{cases}
p^{\frac{\ell -1}{2}}\left(\frac{(-1)^{\frac{\ell+1}{2}}\mathcal{R}_{F}}{p}\right)\qquad\qquad& if~ \ell~ is~ odd\\
-p^{\frac{\ell -2}{2}}\left(\frac{(-1)^{\ell/2}\Delta_{F}}{p}\right)\qquad\qquad& if~ \ell~ is~ even
\end{cases}& if~t=1\\
\qquad\qquad\qquad 0\qquad\qquad\qquad\qquad\qquad\quad& if~t\ge 2.
\end{cases}
\end{align*}
\end{corollary}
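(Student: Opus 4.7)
The statement is a direct specialization of Lemma \ref{tm22}, and my plan is simply to read off which of the three cases of that lemma applies under the stronger hypothesis $(p, 2\Delta_F \mathcal{R}_F)=1$, case-analyzing on $t$.

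First I would observe that the hypothesis $(p, 2\Delta_F \mathcal{R}_F)=1$ in particular implies $(p, 2\Delta_F)=1$, so Lemma \ref{tm22} is applicable, and it furthermore implies that $p \nmid \mathcal{R}_F$. This last fact is what trivializes the case analysis: since $\mathcal{R}_F$ is a nonzero residue modulo $p$, the divisibility condition $\mathcal{R}_F \equiv 0 \pmod{p^{t-1}}$ holds if and only if $t-1 = 0$, i.e.\ $t=1$.

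For $t=1$, the congruence $\mathcal{R}_F \equiv 0 \pmod{p^{t-1}} = \pmod{1}$ is automatic while $\mathcal{R}_F \not\equiv 0 \pmod{p^t} = \pmod{p}$, placing us in part (2) of Lemma \ref{tm22}. Since $t=1$ is odd, I would read off the two subcases directly: when $\ell$ is odd, $S_F(p) = p^{(\ell-1)/2}\bigl(\tfrac{(-1)^{(\ell+1)/2}\mathcal{R}_F/p^0}{p}\bigr) = p^{(\ell-1)/2}\bigl(\tfrac{(-1)^{(\ell+1)/2}\mathcal{R}_F}{p}\bigr)$, and when $\ell$ is even, $S_F(p) = -p^{(\ell-2)/2}\bigl(\tfrac{(-1)^{\ell/2}\Delta_F}{p}\bigr)$, which matches the corollary.

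For $t \ge 2$, we have $t - 1 \ge 1$ and so $p^{t-1} \nmid \mathcal{R}_F$, i.e.\ $\mathcal{R}_F \not\equiv 0 \pmod{p^{t-1}}$. This puts us in part (3) of Lemma \ref{tm22}, which gives $S_F(p^t)=0$ immediately. Because the argument is entirely a bookkeeping exercise on the hypotheses of Lemma \ref{tm22}, there is no real obstacle; the only point that deserves care is noting that the convention $p^0 = 1$ makes the case $t=1$ fall into part (2) rather than part (3), so the formula from part (2) is what produces the $t=1$ value.
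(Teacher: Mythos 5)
Your proposal is correct and is exactly the specialization the paper intends: the corollary is stated without a separate proof precisely because, as you observe, $(p,2\Delta_F\mathcal{R}_F)=1$ forces $p\nmid\mathcal{R}_F$, so $t=1$ lands in part (2) of Lemma \ref{tm22} (with $t$ odd) and $t\ge 2$ lands in part (3). Your bookkeeping, including the $p^{0}=1$ convention and the substitution $\mathcal{R}_F/p^{t-1}=\mathcal{R}_F$ for $t=1$, matches the paper's implicit argument.
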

\subsection{The treatment of $L(s,F)$} Firstly, we consider $\mathcal{R}_{F}=0$.
In this case, we have firstly
\begin{equation}\label{6311}
L(s, F)=\prod_{p|2\Delta_{F}}\left(1+\sum_{m\ge 1}S_F(p^m)p^{-ms}\right)\prod_{p\nmid 2\Delta_{F}}\left(1+\sum_{m\ge 1}S_F(p^m)p^{-ms}\right).
\end{equation}
If $\ell\ge 3$ is an odd integer then by Lemma \ref{tm22} one has $S_F(p^{2m+1})=0$. Therefore the above second product equals to
\begin{align*}
\prod_{p\nmid 2\Delta_{F}}\sum_{m\ge 0}\frac{S_F(p^{2m})}{p^{2ms}}&=\prod_{p\nmid 2\Delta_{F}}\left(1+(1-p^{-1})\sum_{m\ge 1}p^{\frac{\ell 2m}{2}}p^{-2ms}\right)\\
&=\prod_{p|2\Delta_{F}}\left(1+(1-p^{-1})\sum_{m\ge 1}p^{(\ell-2s)m}\right)^{-1}\prod_{p}\left(1+(1-p^{-1})\sum_{m\ge 1}p^{(\ell-2s)m}\right)\\
&=\prod_{p|2\Delta_{F}}\left(\frac{1-p^{\ell-2s-1}}{1-p^{\ell-2s}}\right)^{-1}\prod_{p}\left(\frac{1-p^{\ell-2s-1}}{1-p^{\ell-2s}}\right)\\
&=\frac{\zeta(2s-\ell)}{\zeta(2s+1-\ell)}\prod_{p|2\Delta_{F}}\left(\frac{1-p^{\ell-2s}}{1-p^{\ell-2s-1}}\right).
\end{align*}
Hence we have
\begin{align}\label{211}
L(s, F)=f(s, F){\zeta(2s-\ell)}{\zeta(2s+1-\ell)}^{-1}
\end{align}
and where
\begin{align}\label{ssff}
f(s, F)=\prod_{p|2\Delta_{F}}\left\{\left(\frac{1-p^{\ell-2s}}{1-p^{\ell-2s-1}}\right)\left(1+\sum_{m\ge 1}S_F(p^m)p^{-ms}\right)\right\}.
\end{align}
On the other hand, by (\ref{21}) we obtain that
\begin{align}\label{ssff1}
\prod_{p|2\Delta_{F}}\left(1+\sum_{m\ge 1}S_F(p^m)p^{-ms}\right)=
\prod_{p|2\Delta_{F}}\left\{\left(1-p^{\ell-s-1}\right)\sum_{m\ge 0}\frac{\varrho_{F}(p^m)}{p^{ms}}\right\}.
\end{align}
Hence
\begin{align}\label{ssff2}
f(s, F)=\prod_{p|2\Delta_{F}}\left(\frac{(1-p^{\ell-2s})(1-p^{\ell-s-1})}{1-p^{\ell-2s-1}}\sum_{m\ge 0}\frac{\varrho_{F}(p^m)}{p^{ms}}\right).
\end{align}
The next we consider the case $\ell$ is even. Similarly we have the second product factor of (\ref{6311}) equals to
\begin{align*}
&=1+(1-p^{-1})\sum_{m\ge 1}\left(p^{-(2m-1)s}p^{\frac{\ell (2m-1)}{2}}\left(\frac{(-1)^{\frac{\ell}{2}}\Delta_{F}}{p}\right)+p^{-2ms}p^{\ell m}\right)\\
&=1+(1-p^{-1})\left(1+p^{s-\frac{\ell}{2}}\left(\frac{(-1)^{\frac{\ell}{2}}\Delta_{F}}{p}\right)\right)\sum_{m\ge 1}p^{(\ell-2s)m}\\
&=1+(1-p^{-1})\left(1+p^{s-\frac{\ell}{2}}\left(\frac{(-1)^{\frac{\ell}{2}}\Delta_{F}}{p}\right)\right)\left(\frac{p^{\ell-2s}}{1-p^{\ell-2s}}\right)\\
&=\left(\frac{1-p^{\ell-2s-1}}{1-p^{\ell-2s}}\right)\left(1+\left(\frac{(-1)^{\frac{\ell}{2}}\Delta_{F}}{p}\right)\frac{(1-p^{-1})p^{\frac{\ell}{2}-s}}{1-p^{\ell-2s-1}}\right).
\end{align*}
On the other hand, if $p|\Delta_{F}$ then one has $\left(\frac{\Delta_{F}}{p}\right)=0$. Hence similar with the case $\ell$ is odd, we obtain that
\begin{align}\label{216}
L(s, F)=f(s, F)\frac{\zeta(2s-\ell)}{\zeta(2s+1-\ell)}
\prod_{p>2}\left(1+\left(\frac{(-1)^{\frac{\ell}{2}}\Delta_{F}}{p}\right)\frac{(1-p^{-1})p^{\frac{\ell}{2}-s}}{1-p^{\ell-2s-1}}\right),
\end{align}
where $f(s, F)$ is defined same as (\ref{ssff}) and (\ref{ssff2}).
Using the definition notations (\ref{def12}) and (\ref{def13}). Then, we have
\begin{equation}\label{ll1}
L(s, F)=f(s, F)\frac{\zeta(2s-\ell)}{\zeta(2s+1-\ell)}\prod\limits_{p>2}\left(1+\left(\frac{\mathcal{H}_{F}}{p}\right)
\frac{(1-p^{-1})p^{\frac{\ell}{2}-s}}{1-p^{\ell-2s-1}}\right),
\end{equation}
where
\[f(s, F)=\prod_{p|2\Delta_{F}}\left(\frac{(1-p^{\ell-2s})(1-p^{\ell-s-1})}{1-p^{\ell-2s-1}}\sum_{m\ge 0}\frac{\varrho_{F}(p^m)}{p^{ms}}\right).\]
\newline

Let us consider the cases $\mathcal{R}_{F}\neq 0$. By the above Corollary \ref{tm23}, if $(p, 2\Delta_{F}\mathcal{R}_{F})=1$ then one has
\begin{align*}
S_F(p)=
\begin{cases}
p^{\frac{\ell -1}{2}}\left(\frac{(-1)^{\frac{\ell+1}{2}}\mathcal{R}_{F}}{p}\right)\qquad\qquad& if~ \ell~ is~ odd\\
-p^{\frac{\ell -2}{2}}\left(\frac{(-1)^{\ell/2}\Delta_{F}}{p}\right)\qquad\qquad& if~ \ell~ is~ even.
\end{cases}
\end{align*}
and if $t\ge 2$ then $S(F;p^t)=0$. Therefore we get that
$$
\prod_{p\nmid 2\Delta_{F}\mathcal{R}_{F}}\left(1+\sum_{m\ge 1}S_F(p^m)p^{-ms}\right)=\prod_{p\nmid 2\Delta_{F}\mathcal{R}_{F}}\left(1+S_F(p)p^{-s}\right).
$$
Hence
\begin{align*}
\prod_{p\nmid 2\Delta_{F}\mathcal{R}_{F}}\left(1+S_F(p)p^{-s}\right)=\begin{cases}\prod\limits_{p\nmid 2\Delta_{F}\mathcal{R}_{F}}\left(1+\left(\frac{(-1)^{\frac{\ell+1}{2}}\mathcal{R}_{F}}{p}\right)p^{-(s-\frac{\ell-1}{2})}\right)\quad& if ~\ell~ is~ odd\\
\prod\limits_{p\nmid 2\Delta_{F}\mathcal{R}_{F}}\left(1-\left(\frac{(-1)^{\ell/2}\Delta_{F}}{p}\right)p^{-(s-\frac{\ell-2}{2})}\right)\qquad& if~ \ell~ is~ even.
\end{cases}
\end{align*}
Similar with (\ref{211}) or (\ref{216}), use the definition notations (\ref{def12}) and (\ref{def13}). Then, we have
\begin{equation}\label{ll2}
L(s, F)=h(s, F)\prod\limits_{p\nmid 2\Delta_{F}\mathcal{R}_{F}}\left\{\left(1+\left(\frac{\mathcal{O}_{F}}{p}\right)p^{-(s-\frac{\ell-1}{2})}\right)\left(1-\left(\frac{\mathcal{H}_{F}}{p}\right)p^{-(s-\frac{\ell-2}{2})}\right)\right\},
\end{equation}
where
$$h(s, F)=\prod_{p|2\Delta_{F}\mathcal{R}_{F}}\left\{\left(1-p^{\ell-s-1}\right)\sum_{m\ge 0}\frac{\varrho_{F}(p^m)}{p^{ms}}\right\}.$$
Then combine (\ref{ll0}), (\ref{ll1}) and (\ref{ll2}), we get the proof of the theorem.
\section{Examples}
In the section, we give the computation of some example. We always assume that $\ell\in\zb_{\ge 1}$.
\begin{lemma}\label{tm31}For all $(b, 2)=1$, we have
\begin{align*}
\sum_{r\in\mathbb{Z}_{2^t}}e\left(\frac{br^2}{2^t}\right)=\begin{cases}\qquad 0\qquad\qquad\qquad
& t=1 \\
\quad 2^{\frac{t+1}{2}}e(\frac{b}{8})\qquad\qquad& t>1, t\equiv 1\bmod 2 \\
\left(1+(\sqrt{-1})^{b}\right)2^{\frac{t}{2}}\qquad& t\equiv
0\bmod 2
\end{cases}.
\end{align*}
\end{lemma}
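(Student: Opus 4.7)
The plan is to prove this by induction on $t$, establishing the recurrence $S_t(b)=2S_{t-2}(b)$ for $t\ge 4$, where $S_t(b):=\sum_{r\in\mathbb{Z}_{2^t}}e(br^2/2^t)$; the base cases $t=1,2,3$ will be handled by direct computation.

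First I would dispose of the base cases. For $t=1$, the sum is $1+e(b/2)=1+(-1)^b=0$ since $b$ is odd. For $t=2$, the residues $r^2\bmod 4$ for $r\in\{0,1,2,3\}$ are $0,1,0,1$, so the sum equals $2\bigl(1+(\sqrt{-1})^b\bigr)$, matching the claim. For $t=3$, a direct enumeration of eight terms (using $(2s+1)^2\equiv 1\pmod 8$ and $(2s)^2\bmod 8\in\{0,4\}$) gives $4\,e(b/8)$, again matching.

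For the inductive step with $t\ge 4$, the first observation is that for $t\ge 2$ the substitution $r\mapsto r+2^{t-1}$ preserves $r^2\bmod 2^t$, since the increment is $r\cdot 2^t+2^{2t-2}\equiv 0\pmod{2^t}$. Thus $S_t(b)=2\sum_{r=0}^{2^{t-1}-1}e(br^2/2^t)$. Splitting this halved range by parity of $r$ and writing $r=2s$ or $r=2s+1$ with $s\in\{0,\dots,2^{t-2}-1\}$, the even part telescopes cleanly: $br^2/2^t=bs^2/2^{t-2}$, and as $s$ runs over a complete residue system modulo $2^{t-2}$ the sum is exactly $S_{t-2}(b)$. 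The odd part factors as $e(b/2^t)\cdot T_{t-2}(b)$, where $T_u(b):=\sum_{s\in\mathbb{Z}_{2^u}}e\!\bigl(b(s^2+s)/2^u\bigr)$, using $(2s+1)^2=4(s^2+s)+1$.

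The crux is to show $T_u(b)=0$ for $u\ge 2$. I would apply the shift $s\mapsto s+2^{u-1}$ inside $T_u(b)$: it changes $s^2+s$ by $s\cdot 2^u+2^{2u-2}+2^{u-1}$, which reduces modulo $2^u$ to $2^{u-1}$ (this is where $u\ge 2$ is used, to kill the $2^{2u-2}$ term). Consequently every term is multiplied by $e(b/2)=-1$, so pairing $s$ with $s+2^{u-1}$ produces total cancellation. The recurrence $S_t(b)=2S_{t-2}(b)$ for $t\ge 4$ then follows immediately, and one checks that both branches of the stated formula are preserved under $t\mapsto t+2$ with a doubling factor of $2$, closing the induction. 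The only subtle point is the borderline condition $2u-2\ge u$ needed to cancel $T_u$; at $u=1$ this fails, and that is precisely what allows $T_1(b)=2$ to survive in the $t=3$ base case, leaving the residual $e(b/8)$ that accounts for the odd branch of the formula.
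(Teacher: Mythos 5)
Your argument is correct and complete, but it is genuinely different from what the paper does: the paper gives no argument at all, simply citing Theorem 8 of Chapter 17 of Hua's \emph{Introduction to Number Theory} for this evaluation of the quadratic Gauss sum modulo $2^t$. Your proof replaces that external reference with a self-contained elementary induction. The two key mechanics — the reduction $S_t(b)=2S_{t-2}(b)$ for $t\ge 4$ obtained by halving the range via $r\mapsto r+2^{t-1}$ and splitting by parity, and the cancellation $T_u(b)=\sum_{s}e\bigl(b(s^2+s)/2^u\bigr)=0$ for $u\ge 2$ via the shift $s\mapsto s+2^{u-1}$ — both check out, and your remark that the failure of the cancellation at $u=1$ is exactly what produces the surviving $e(b/8)$ in the odd branch correctly explains why the $t=3$ base case behaves differently from $t=1$. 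I verified the base cases as well: $S_1(b)=1+(-1)^b=0$, $S_2(b)=2(1+(\sqrt{-1})^{\,b})$, and $S_3(b)=4e(b/8)$ from the residue counts of $r^2\bmod 8$. What your approach buys is independence from the (Chinese-language) reference and a transparent mechanism; what the citation buys is brevity and access to the general modulus-$q$ statement that Hua proves in one place. Either is acceptable here, though if the paper wished to be self-contained your argument would be the natural one to insert.
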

\begin{proof}
The lemma is the Theorem 8 of Chapter 17 in \cite{Hua1957}.
\end{proof}
If $t=1$, then we have
\begin{equation}\label{72}
\sum_{1\le b\le 2, 2\nmid b}\left(\sum_{r\in\mathbb{Z}_{2}}e\left(\frac{br^2}{2}\right)\right)^{\ell}=0.
\end{equation}
If $t>1$ is an odd integer, then
\begin{align}\label{73}
\sum_{1\le b\le 2^t, 2\nmid b}\left(\sum_{r\in\mathbb{Z}_{2^t}}e\left(\frac{br^2}{2^t}\right)\right)^{\ell}&=
2^{\ell\frac{t+1}{2}}\sum_{1\le b\le 2^{t-1}}e\left(\frac{\ell (2b-1)}{8}\right)=
2^{\ell\frac{t+1}{2}}e\left(\frac{-\ell}{8}\right)\sum_{1\le b\le 2^{t-1}}e\left(\frac{\ell b}{4}\right)\nonumber\\
&=
\begin{cases}
\qquad\quad 0\qquad\qquad\qquad& 4\nmid \ell, \\
2^{\frac{\ell+2}{2}t+\frac{\ell}{2}-1}(-1)^{\ell/4}\quad&\ell\equiv 0\bmod 4.
\end{cases}
\end{align}
If $t>1$ is an even integer, then
\begin{align}\label{74}
\sum_{1\le b\le 2^t, 2\nmid b}\left(\sum_{r\in\mathbb{Z}_{2^t}}e\left(\frac{br^2}{2^t}\right)\right)^{\ell}&=
2^{\ell\frac{t}{2}}\sum_{b=1}^{2^{t-1}}\left(1+(\sqrt{-1})^{2b-1}\right)^{\ell}=
2^{\ell\frac{t}{2}}\sum_{b=1}^{2^{t-1}}\left(1+(-1)^{b}\sqrt{-1}\right)^{\ell}\nonumber\\
&=2^{\ell\frac{t}{2}}\sum_{b=1}^{2^{t-2}}\left(\left(1-\sqrt{-1}\right)^{\ell}+\left(1+\sqrt{-1}\right)^{\ell}\right)
=2^{\frac{\ell+2}{2}t+\frac{\ell}{2}-1}\cos{\frac{\pi\ell}{4}}.
\end{align}
Combining (\ref{72}-\ref{74}) and Lemma \ref{tm31} one has the lemma.
\begin{lemma}
\begin{align}\label{75}
2^{-t}\sum_{1\le b\le 2^t, 2\nmid b}\left(\sum_{r\in\mathbb{F}_{2^t}}e\left(\frac{br^2}{2^t}\right)\right)^{\ell}
=\begin{cases}
\quad\quad\quad~~ 0\quad\quad\quad& t=1~ or~ t~ is~ odd~ and~ 4\nmid \ell, \\
2^{\frac{\ell}{2}t+\frac{\ell}{2}-1}\cos{\frac{\pi\ell}{4}}& t>1, ~4|\ell ~or ~t>1~ is ~even~ and ~4\nmid \ell.
\end{cases}
\end{align}
\end{lemma}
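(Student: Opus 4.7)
The plan is to invoke Lemma \ref{tm31} case by case according to whether $t=1$, $t>1$ is odd, or $t$ is even, raise the inner Gauss sum to the $\ell$-th power, and sum over odd $b \in [1,2^t]$. The three regimes produce the identities \eqref{72}, \eqref{73}, \eqref{74} already established above, and the lemma will follow on dividing by $2^t$.

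First, for $t=1$ the inner Gauss sum already vanishes by Lemma \ref{tm31}, so the whole expression is $0$, recording \eqref{72}. Next, for odd $t>1$ I would plug in $2^{(t+1)/2}e(b/8)$ from Lemma \ref{tm31}, pulling out the common factor $2^{\ell(t+1)/2}$. What remains is $\sum_{b\text{ odd}}e(\ell b/8)$; the reindexing $b=2b'-1$ with $1\le b'\le 2^{t-1}$ turns this into $e(-\ell/8)\sum_{b'=1}^{2^{t-1}}e(\ell b'/4)$. This geometric progression vanishes unless $4\mid\ell$, in which case it equals $2^{t-1}$ and the surviving phase collapses to $(-1)^{\ell/4}$, producing \eqref{73}.

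For even $t$, I would substitute $(1+(\sqrt{-1})^b)\,2^{t/2}$. Because $b$ is odd, writing $b=2b'-1$ one checks that $(\sqrt{-1})^{2b'-1}$ alternates between $\sqrt{-1}$ and $-\sqrt{-1}$ as $b'$ runs through $[1,2^{t-1}]$, so the sum splits evenly into $2^{t-2}$ copies each of $(1+\sqrt{-1})^\ell$ and $(1-\sqrt{-1})^\ell$. Writing $1\pm\sqrt{-1}=\sqrt{2}\,e^{\pm i\pi/4}$ evaluates the bracket as $2^{\ell/2+1}\cos(\pi\ell/4)$, giving \eqref{74}.

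Finally, dividing \eqref{72}--\eqref{74} by $2^t$ and using the identity $(-1)^{\ell/4}=\cos(\pi\ell/4)$, valid whenever $4\mid\ell$, merges the odd-$t>1$ with $4\mid\ell$ branch and the even-$t>1$ branch into the single closed form in the statement. The main obstacle is not conceptual but bookkeeping: one must carefully track the powers of $2$ and the complex phases produced by Hua's Gauss-sum formula without dropping a sign, and recognise that the $\cos(\pi\ell/4)$ factor subsumes both the odd and even cases simultaneously.
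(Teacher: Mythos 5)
Your proposal is correct and follows essentially the same route as the paper, which likewise derives the three identities (\ref{72})--(\ref{74}) from Lemma \ref{tm31} by the case split $t=1$, $t>1$ odd, $t$ even and then merges the nonzero branches via $(-1)^{\ell/4}=\cos(\pi\ell/4)$ for $4\mid\ell$. The exponent bookkeeping checks out: $2^{\frac{\ell+2}{2}t+\frac{\ell}{2}-1}/2^{t}=2^{\frac{\ell}{2}t+\frac{\ell}{2}-1}$ as claimed.
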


Now, we give the following examples
\begin{example}Let quadratic form
$$F=k_1^2+k_2^2+. . . +k_{\ell}^{2}$$
Then by. we have $\Delta_{F}=2^{\ell}$, and
\begin{align*}
\sum_{m\ge 1}\frac{S_F(2^m)}{2^{ms}}&=\sum_{m\ge 1}\left(S_F(2^{2m})2^{-2ms}+S_F(2^{2m+1})2^{-({2m+1})s}\right)\\
&=\begin{cases}
\sum_{m\ge 1}S_F(2^{2m})2^{-2ms}\quad  &4\nmid \ell\\
\sum_{m\ge 2}S_F(2^{m})2^{-ms}\quad  &4\mid \ell
\end{cases}=\begin{cases}
\sum_{m\ge 1}2^{(\ell-2s)m+\frac{\ell}{2}-1}\cos{\frac{\pi\ell}{4}}\quad  &4\nmid \ell\\
\sum_{m\ge 2}2^{(\frac{\ell}{2}-s)m+\frac{\ell}{2}-1}\cos{\frac{\pi\ell}{4}}\quad  &4\mid \ell
\end{cases}\\
&=2^{\frac{3\ell}{2}-2s-1}\cos{\frac{\pi\ell}{4}}\times\begin{cases}({1-2^{\ell-2s}})^{-1}\quad  &4\nmid \ell\\
({1-2^{\ell/2-s}})^{-1}\quad  &4\mid \ell.
\end{cases}
\end{align*}
If $\ell\equiv 0\bmod 4$, then $\left(\frac{\mathcal{H}_{F}}{p}\right)=1$ and
\begin{align*}
L(s,F)&=\left(\frac{1-2^{\ell-2s}}{1-2^{\ell-2s-1}}\right)\left(1+\frac{2^{\frac{3\ell}{2}-2s-1}(-1)^{\ell/4}}{1-2^{\ell/2-s}}\right)
\frac{\zeta(2s-\ell)}{\zeta(2s+1-\ell)}\prod_{p>2}\left(1+\frac{(1-p^{-1})p^{\frac{\ell}{2}-s}}{1-p^{\ell-2s-1}}\right)\\
&=\left(\frac{1-2^{\ell-2s}}{1-2^{\ell-2s-1}}\right)\left(1+\frac{2^{\frac{3\ell}{2}-2s-1}(-1)^{\frac{\ell}{4}}}{1-2^{\frac{\ell}{2}-s}}\right)
\frac{\zeta(2s-\ell)}{\zeta(2s+1-\ell)}\prod_{p>2}\frac{(1-p^{\frac{\ell}{2}-s-1})(1-p^{{\ell}-2s})}{(1-p^{\ell-2s-1})(1-p^{\frac{\ell}{2}-s})}\\
&=\left(1+\frac{2^{{3\ell}/{2}-2s-1}(-1)^{\ell/4}}{1-2^{\ell/2-s}}\right)\left(\frac{1-2^{{\ell}/{2}-s-1}}{1-2^{{\ell}/{2}-s}}\right)^{-1}
\prod_{p}\frac{1-p^{{\ell}/{2}-s-1}}{1-p^{{\ell}/{2}-s}}\\
&=\frac{1-2^{{\ell}/{2}-s}+(-1)^{\ell/4}2^{{3\ell}/{2}-2s-1}}{1-2^{{\ell}/{2}-s-1}}\frac{\zeta(s-{\ell}/{2})}{\zeta(s+1-{\ell}/{2})}.
\end{align*}
Thus we obtain that
\[L(\ell,F)=\frac{2^{\frac{\ell}{2}+1}-2+(-1)^{\ell/4}}{2^{\frac{\ell}{2}+1}-1}\frac{\zeta(\frac{\ell}{2})}{\zeta(\frac{\ell}{2}+1)}\]
and
\[\frac{L'(\ell,F)}{L(\ell,F)}=\frac{\zeta '\left(\frac{\ell}{2}\right)}{\zeta \left(\frac{\ell}{2}\right)}-\frac{\zeta '\left(\frac{\ell}{2}+1\right)}{\zeta \left(\frac{\ell}{2}+1\right)}+\frac{(-1)^{\ell/4}+2^{\frac{\ell}{2}+1}-(-1)^{\frac{\ell}{4}} 2^{\frac{\ell}{2}+2}}{\left(2^{\frac{\ell}{2}+1}-1\right)\left(2^{\frac{\ell}{2}+1}-2+(-1)^{\ell/4}\right)}\log 2.
\]

If $\ell\equiv \pm 1\bmod 4$, then $\left(\frac{\mathcal{H}_{F}}{p}\right)=0$ and
\begin{align*}
L(s,Q_F)&=\left(\frac{1-2^{\ell-2s}}{1-2^{\ell-2s-1}}\right)\left(1+\frac{2^{\frac{3\ell}{2}-2s-1}\cos{\frac{\pi\ell}{4}}}{{1-2^{\ell-2s}}}\right)\frac{\zeta(2s-\ell)}{\zeta(2s+1-\ell)}\\
&=\frac{1-2^{\ell-2s}+2^{\frac{3\ell}{2}-2s-1}\cos{\frac{\pi\ell}{4}}}{{1-2^{\ell-2s-1}}}\frac{\zeta(2s-\ell)}{\zeta(2s+1-\ell)}
\end{align*}
Hence
\[
L(\ell,F)=\left(1-\frac{1-2^{\frac{\ell}{2}}\cos{\frac{\pi\ell}{4}}}{{2^{\ell+1}-1}}\right)\frac{\zeta(\ell)}{\zeta(\ell+1)}
\]
and
\[
\frac{L'(\ell,F)}{L(\ell,F)}=\frac{\zeta '\left({\ell}\right)}{\zeta \left({\ell}\right)}-\frac{\zeta '\left({\ell}+1\right)}{\zeta \left({\ell}+1\right)}
+\frac{2^{\ell+2} \left(1-2^{\ell/2} \cos \left(\frac{\ell\pi}{4}\right)\right)\log 2}{\left(2^{\ell+1}-1\right)\left(2^{\ell+1}-2+2^{\frac{\ell}{2}}\cos{\frac{\pi\ell}{4}}\right)}.
\]

If $\ell\equiv 2\bmod 4$, then $\left(\frac{\mathcal{H}_{F}}{p}\right)=\left(\frac{-1}{p}\right)$ and
\begin{align*}
L(s,F)&=\left(\frac{1-2^{\ell-2s}}{1-2^{\ell-2s-1}}\right)\left(1+\frac{2^{\frac{3\ell}{2}-2s-1}\cos{\frac{\pi\ell}{4}}}{{1-2^{\ell-2s}}}\right)\frac{\zeta(2s-\ell)}{\zeta(2s+1-\ell)}\prod_{p>2}\left(1+\left(\frac{-1}{p}\right)\frac{(1-p^{-1})p^{\frac{\ell}{2}-s}}{1-p^{\ell-2s-1}}\right)\\
&=\left(\frac{1-2^{\ell-2s}}{1-2^{\ell-2s-1}}\right)\frac{\zeta(2s-\ell)}{\zeta(2s+1-\ell)}\prod_{p>2}\left(1+\left(\frac{-1}{p}\right)\frac{(1-p^{-1})p^{\frac{\ell}{2}-s}}{1-p^{\ell-2s-1}}\right)\\
&=\prod_{p>2}\frac{1-p^{\ell-2s-1}+\left(\frac{-1}{p}\right)(1-p^{-1})p^{\frac{\ell}{2}-s}}{1-p^{\ell-2s}}=\prod_{p>2}\frac{\left(1+\left(\frac{-1}{p}\right)p^{\frac{\ell}{2}-s}\right)\left(1-\left(\frac{-1}{p}\right)p^{\frac{\ell}{2}-s-1}\right)}{1-p^{\ell-2s}}\\
&=\prod_{p>2}\frac{1-\left(\frac{-1}{p}\right)p^{\frac{\ell}{2}-s-1}}{1-\left(\frac{-1}{p}\right)p^{\frac{\ell}{2}-s}}=\frac{L(s-\ell/2,\chi)}{L(s+1-\ell/2,\chi)},
\end{align*}
where $\chi$ is the non-principal Dirichlet character modulo $4$. Hence, we have
\[
L(\ell,F)=\frac{L(\ell/2,\chi)}{L(\ell/2+1,\chi)}\;\mbox{and}\; \frac{L'(\ell,F)}{L(\ell,F)}=\frac{L'(\ell/2,\chi)}{L(\ell/2,\chi)}-\frac{L'(\ell/2+1,\chi)}{L(\ell/2+1,\chi)}.
\]
\end{example}

\begin{example}Let quadratic form be
\begin{align*}
Q_{FT}=k_1k_2+k_3k_4+. . . +k_{2\ell-1}k_{2\ell}~~\text{and}~~
Q_{FM}=k_0^2+k_1k_2+. . . +k_{2\ell-1}k_{2\ell}.
\end{align*}
$$\Delta_{Q_{FT}}=(-1)^{\ell},~\Delta_{Q_{FM}}=2(-1)^{\ell},~\mathcal{H}_{Q_{FT}}=1,\quad\quad \mathcal{H}_{Q_{FM}}=0,$$
$$S(Q_{FT},2^m)=2^{-m}\sum_{b\in\zb_{2^m}^*}\left(\sum_{h_1,h_2\in\zb_{2^m}}e\left(\frac{bh_1h_2}{2^m}\right)\right)^{\ell}=2^{m\ell-1},$$
\begin{align*}
S(Q_{FM}, 2^m)&=2^{-m}\sum_{b\in\zb_{2^m}^*}\sum_{h_0\in\zb_{2^m}}e\left(\frac{bh_0^2}{2^m}\right)\left(\sum_{h_1, h_2\in\zb_{2^m}}e\left(\frac{bh_1h_2}{2^m}\right)\right)^{\ell}\\
&=2^{-m}\sum_{b\in\zb_{2^m}^*}\sum_{h_0\in\zb_{2^m}}e\left(\frac{bh_0^2}{2^m}\right)2^{m\ell}
=\begin{cases}\quad\quad 0\qquad m~is~odd\\
2^{\frac{(2\ell+1)m}{2}-1}\quad~ m~is~even.
\end{cases}
\end{align*}
Hence
\begin{align*}
f(s, Q_{FT})&=\left(\frac{1-2^{2\ell-2s}}{1-2^{2\ell-2s-1}}\right)\left(1+\sum_{m\ge 1}\frac{S(Q_{FM}, 2^m)}{2^{ms}}\right)\\
&=\left(\frac{1-2^{2\ell-2s}}{1-2^{2\ell-2s-1}}\right)\left(\frac{1-2^{\ell-1-s}}{1-2^{\ell-s}}\right)=1+\frac{2^{\ell-s-1}}{1-2^{2\ell-1-2s}},
\end{align*}
\begin{align*}
f(s, Q_{FM})&=\left(\frac{1-2^{2\ell+1-2s}}{1-2^{2\ell-2s}}\right)\left(1+\sum_{m\ge 1}\frac{S(Q_{FT}, 2^m)}{2^{ms}}\right)\\
&=\left(\frac{1-2^{2\ell+1-2s}}{1-2^{2\ell-2s}}\right)\left(\frac{1-2^{2\ell-2s}}{1-2^{2\ell+1-2s}}\right)=1,
\end{align*}
\begin{align*}
L(s, Q_{FT})=\frac{\zeta(s-\ell)}{\zeta(s+1-\ell)},~L(s, Q_{FM})=\frac{\zeta(2s-2\ell-1)}{\zeta(2s-2\ell)}.
\end{align*}
We have
$$
\sum_{1\le x_1, x_2, x_3, x_4\le X}\tau\left(x_1x_2+x_3x_4\right)=\left(c_{\mathcal{TT}}(\tau)_mX^{4}\log X+c_{\mathcal{TT}}(\tau)_sX^4\right)+O\left( X^{4-\frac{1}{3}+\varepsilon}\right),
$$
where
$$
c_{\mathcal{TT}}(\tau)_m=\frac{2\zeta(2)}{\zeta(3)} \quad \text{and}\quad
c_{\mathcal{TT}}(\tau)_s=\frac{2\zeta(2)}{\zeta(3)}\left(\gamma+\frac{\pi ^2}{48}-\frac{11}{6}+\log 2+\frac{\zeta'(2)}{\zeta(2)}-\frac{\zeta'(3)}{\zeta(3)}\right).
$$
We have
$$
\sum_{1\le x_0, x_1, x_2\le X}\tau\left(x_0^2+x_1x_2\right)=\left(c_{\mathcal{OT}}(\tau)_mX^{3}\log X+c_{\mathcal{OT}}(\tau)_sX^3\right)+O\left( X^{3-\frac{1}{5}+\varepsilon}\right),
$$
where
$$
c_{\mathcal{OT}}(\tau)_m=\frac{2\zeta(3)}{\zeta(4)} ~~\text{and}~~
c_{\mathcal{OT}}(\tau)_s=\frac{4\zeta(3)}{\zeta(4)}\left(\frac{\gamma}{2}+\frac{\pi ^2+8 \pi -104+56 \log 2}{144}+\frac{\zeta'(3)}{\zeta(3)}-\frac{\zeta'(4)}{\zeta(4)}\right).
$$
\end{example}
\begin{example}
Consider the following quadratic form,
$$Q_{sym}=xy+yz+xz.$$

$$\mathcal{H}_{Q_{sym}}=0$$,
$$
L(s, Q_{FM})=\zeta(2s-3)\zeta(2s-2)^{-1}.
$$
We have
$$
\sum_{1\le x, y, z\le X}\tau\left(xy+yz+xz\right)=\left(c_{{sym}}(\tau)_mX^{3}\log X+c_{{sym}}(\tau)_sX^3\right)+ O\left(X^{3-\frac{1}{5}+\varepsilon}\right),
$$
where
$$
c_{{sym}}(\tau)_m=\frac{2\zeta(2)}{\zeta(3)}~~\text{and}~~
c_{{sym}}(\tau)_s=\frac{4\zeta(2)}{\zeta(3)}\left(\frac{\gamma}{2}+\frac{1}{4}\int_{[0,1]^3}\mathrm{d}{\bf t}\log(t_1t_2+t_2t_3+t_1t_3)+\frac{\zeta'(2)}{\zeta(2)}-\frac{\zeta'(3)}{\zeta(3)}\right).
$$
\end{example}

\begin{example}
Consider the following quadratic,
$$F=x^2+y^2+z^2+1$$
its
$$\Delta_{F}=2^3, \quad \mathcal{R}_{F}=2^4$$
Similar with (\ref{75}), we have
\begin{align}\label{78}
S(F, 2^m)=2^{-m}\sum_{\substack{1\le b\le 2^{m}\\(2, b)=1}}e\left(\frac{b}{2^m}\right)\left(\sum_{x\bmod 2^m}e\left(\frac{bx^2}{2^m}\right)\right)^3=0\qquad m\ge 1.
\end{align}
Then
\[h(s, F)=1.
\]
Hence we have
\begin{align}
L(s, F)=\prod\limits_{p>2}\left(1+p^{-s+1}\right)=\frac{1}{1+2^{1-s}}\frac{\zeta(s-1)}{\zeta(2s-2)}.
\end{align}
We have
$$\sum_{1\le x, y, z\le X}\tau(x^2+y^2+z^2+1)=\left(c_{\mathcal{D}_l}(\tau)_mX^{3}\log X+c_{\mathcal{D}_l}(\tau)_sX^3\right)+O\left(X^{\ell-\frac{1}{5}+\varepsilon}\right),$$
where
$$
c_{\mathcal{D}_l}(\tau)_m=\frac{24}{\pi^2}~~\text{and}~~
c_{\mathcal{D}_l}(\tau)_s=\frac{24}{\pi^2}\left(\gamma+\frac{1}{2}\int_{[0,1]^3}\mathrm{d}{\bf t}\log(t_1^2+t_2^2+t_3^2)+\frac{\log 2}{5}+\frac{\zeta'(2)}{\zeta(2)}-2\frac{\zeta'(4)}{\zeta(4)}\right).
$$
\end{example}

\end{document}